\newtheorem{theorem}{Theorem}
\newtheorem{lemma}[theorem]{Lemma}
\newtheorem{definition}[theorem]{Definition}
\newtheorem{remark}[theorem]{Remark}
\numberwithin{theorem}{section}
\numberwithin{equation}{section}
\numberwithin{figure}{section}
\newcommand{\ve}{\varepsilon}
\DeclareMathOperator{\diam}{diam}
\newcommand{\ZZ}{\mathbb{Z}}
\newcommand{\RR}{\mathbb{R}}
\newcommand{\CC}{\mathbb{C}}
\newcommand{\PP}{\mathbb{P}}
\newcommand{\EE}{\mathbb{E}}
\newcommand{\TT}{\mathbb{T}} 
\newcommand{\Ball}{B} 
\newcommand{\Ann}{A} 
\newcommand{\din}{\partial^{\textrm{in}}} 
\newcommand{\dout}{\partial^{\textrm{out}}} 
\newcommand{\cluster}{\mathcal{C}}
\newcommand{\Ch}{\mathcal{C}_H} 
\newcommand{\Cv}{\mathcal{C}_V} 
\newcommand{\circuit}{\mathcal{C}} 
\newcommand{\circuitevent}{\mathcal{O}} 
\newcommand{\colorseq}{\mathfrak{S}} 
\newcommand{\arm}{\mathcal{A}} 
\newcommand{\net}{\mathcal{N}} 
\newcommand{\Pdiam}{\PP_N^{\textrm{diam}}}
\newcommand{\Pvol}{\PP_N^{\textrm{vol}}}
\newcommand{\PPdiam}{\tilde{\PP}_N^{\textrm{diam}}}
\newcommand{\PPvol}{\tilde{\PP}_N^{\textrm{vol}}}
\newcommand{\lra}{\leftrightarrow}
\newcommand{\calE}{\mathcal{E}}
\newcommand{\calF}{\mathcal{F}}
\DeclareFontFamily{U}{mathx}{\hyphenchar\font45}
\DeclareFontShape{U}{mathx}{m}{n}{
      <5> <6> <7> <8> <9> <10>
      <10.95> <12> <14.4> <17.28> <20.74> <24.88>
      mathx10
      }{}
\DeclareSymbolFont{mathx}{U}{mathx}{m}{n}
\DeclareMathAccent{\wideparen}{0}{mathx}{"75}
\begin{document}

\title{Boundary rules and breaking of self-organized criticality\\ in 2D frozen percolation}

\author{Jacob van den Berg\footnote{CWI and VU University Amsterdam}, Pierre Nolin\footnote{ETH Z\"urich}}

\date{}

\maketitle

\begin{abstract}
We study frozen percolation on the (planar) triangular lattice, where connected components stop growing (``freeze'') as soon as their ``size'' becomes at least $N$, for some parameter $N \geq 1$. The size of a connected component can be measured in several natural ways, and we consider the two particular cases of diameter and volume (i.e. number of sites).

Diameter-frozen and volume-frozen percolation have been studied in previous works (\cite{BLN2012, Ki2015} and \cite{BN2015, BKN2015}, resp.), and they display radically different behaviors. These works adopt the rule that the boundary of a frozen cluster stays vacant forever, and we investigate the influence of these ``boundary conditions'' in the present paper. We prove the (somewhat surprising) result that they strongly matter in the diameter case, and we discuss briefly the volume case.

\bigskip

\textit{Key words and phrases: frozen percolation, near-critical percolation, self-organized criticality.}
\end{abstract}

\section{Introduction}

\subsection{Frozen percolation}

In statistical physics, the phenomenon of self-organized criticality (or SOC for short) refers, roughly speaking, to the spontaneous (approximate) arising of a critical regime without any fine-tuning of a parameter. Numerous works have been devoted to it, mostly in physics (see e.g. \cite{Ba1996, Je1998} and the references therein) but also on the rigorous mathematical side. The critical regime of independent percolation is of particular interest, and arises (or seems to arise in some sense) in models of forest fires \cite{DrSc1992, BB2006}, displacement of oil by water in a porous medium \cite{WW1983, DSV2009}, diffusion fronts \cite{SRG1985, No2009}, and in frozen percolation, the topic of the present paper. In all these processes, the stochastic system under consideration ``selects'' a window around the percolation threshold where the relevant macroscopic behavior takes place.

Frozen percolation is a percolation-type growth process introduced by Aldous \cite{Al2000}, inspired by sol-gel transitions \cite{St1943}. In \cite{Al2000}, it is shown that in the particular case of the binary tree, frozen percolation displays a striking exact form of SOC: at any time $p \geq p_c = \frac{1}{2}$, the finite (``non-frozen'') clusters have the same distribution as critical clusters, while the infinite (``frozen'') clusters all look like incipient infinite clusters. In two dimensions, it was shown in \cite{Ki2015} that diameter-frozen percolation also displays a form of SOC: all frozen clusters freeze in a near-critical window around $p_c$, and consequently, they all look similar to critical percolation clusters. Here, we prove the somewhat unexpected result that in two dimensions, the particular mechanism to freeze clusters (what we call ``boundary conditions'', or rather ``boundary rules'') matters strongly, and can lead to a partial breaking of SOC. As we explain below, this result is based on a rather subtle geometric argument showing the existence of narrow passages that can be used to create highly supercritical frozen clusters.

We now focus on a specific version of frozen percolation in two dimensions, defined in terms of site percolation on the triangular lattice $\TT$. This lattice has vertex set
$$V(\TT) = \big\{ x + y e^{i \pi / 3} \in \CC \: : \: x, y \in \ZZ \big\},$$
and its edge set $E(\TT)$ is obtained by connecting all vertices $v, v' \in V(\TT)$ at Euclidean distance $1$ apart (in this case, we say that $v$ and $v'$ are neighbors, and we denote it by $v \sim v'$). For a subset $A \subseteq V(\TT)$, we consider the following two ways of measuring its ``size''. We call \emph{diameter of $A$}, denoted by $\diam(A)$, its diameter for the $L^{\infty}$ norm $\|.\| := \|.\|_{\infty}$ (where $A$ is seen as a subset of $\CC \simeq \RR^2$): $\diam(A) = \sup_{v,v' \in A} \| v - v' \|$. On the other hand, the \emph{volume of $A$}, denoted by $|A|$, is simply the number of vertices that it contains.

Let us consider a family $(\tau_v)_{v \in V(\TT)}$ of i.i.d. random variables uniformly distributed on $[0,1]$. For each $p \in [0,1]$, we declare a vertex $v$ to be $p$-black (resp. $p$-white) if $\tau_v \leq p$ (resp. $\tau_v > p$). Then, $p$-black and $p$-white vertices are distributed according to independent site percolation with parameter $p$: vertices are black or white with probability $p$ and $1-p$, respectively, independently of each other. In the following, the corresponding probability measure is denoted by $\PP_p$, while we use the notation $\PP$ for events involving the whole collection of random variables $(\tau_v)_{v \in V(\TT)}$. It is now classical \cite{Ke1980} that site percolation on $\TT$ displays a phase transition at the critical parameter $p_c = \frac{1}{2}$: for all $p \leq p_c$, there is a.s. no infinite $p$-black cluster, while for $p > p_c$, there exists a.s. an infinite $p$-black cluster, which is moreover unique. For an introduction to percolation theory, the reader can consult \cite{Gr1999}.

The diameter- and volume-frozen percolation processes are defined in terms of the same family $(\tau_v)_{v \in V(\TT)}$. These processes have a parameter $N \geq 1$. At time $t=0$, we start with the initial configuration where all the vertices in $V(\TT)$ are white, and as time $t$ increases from $0$ to $1$, each vertex $v$ can become black only at time $t = \tau_v$, \emph{iff} all the black clusters adjacent to $v$ have a diameter (resp. volume) $< N$. Note that if $v$ is not allowed to turn black at time $\tau_v$, then it stays white until time $t=1$. Hence, black clusters grow until their diameter (resp. volume) becomes $\geq N$, and then their growth is stopped. In this case, the cluster (and the vertices that it contains) is said to be \emph{frozen}. When referring to this process, we use the notation $\Pdiam$ (resp. $\Pvol$). As noted in \cite{BLN2012}, this process is well-defined since it can be represented as a finite-range interacting particle system (see \cite{Li2005} for general theory).

These processes were studied in the previous works \cite{BLN2012, Ki2015} (diameter-frozen percolation) and \cite{BN2015, BKN2015} (volume-frozen percolation). With this definition, a cluster freezes when it becomes large, and all the vertices along its outer boundary then stay white until the end. However, one may ask whether for all applications these ``boundary conditions'' are always the most natural, and if tweaking them would lead to a different macroscopic behavior. This leads us to discuss \emph{modified} (diameter- and volume-) frozen percolation processes, where, informally speaking, the sites adjacent to a frozen cluster become black (and may freeze) at a later time.

More precisely, these processes are defined as follows. Again, we use the collection of random variables $(\tau_v)_{v \in V(\TT)}$, and we start with all vertices white. Since a frozen cluster can touch black clusters, we have to choose a slightly different representation of the process. Now, a vertex $v \in V(\TT)$ can be in three possible states: either white, black (unfrozen), or \emph{frozen}. As time $t$ increases, each vertex $v$ changes state at time $t = \tau_v$. Just before this time, it is white, and it then becomes either black or frozen, depending on the configuration around it: let $\mathcal{B}_{t^-}(v) \subseteq V(\TT)$ be the union of $v$ and all the black clusters adjacent to $v$ at time $t^-$. If $\mathcal{B}_{t^-}(v)$ has a diameter (resp. volume) $\geq N$, then all the vertices in $\mathcal{B}_{t^-}(v)$ change state, from black to frozen. Otherwise, $v$ just becomes black (and may become frozen at a later time). These modified processes are denoted by $\PPdiam$ and $\PPvol$, and it is not difficult to see that they are well-defined, since they can also be seen as finite-range interacting particle systems.

\begin{figure}
\begin{center}

\subfigure{\includegraphics[width=.44\textwidth]{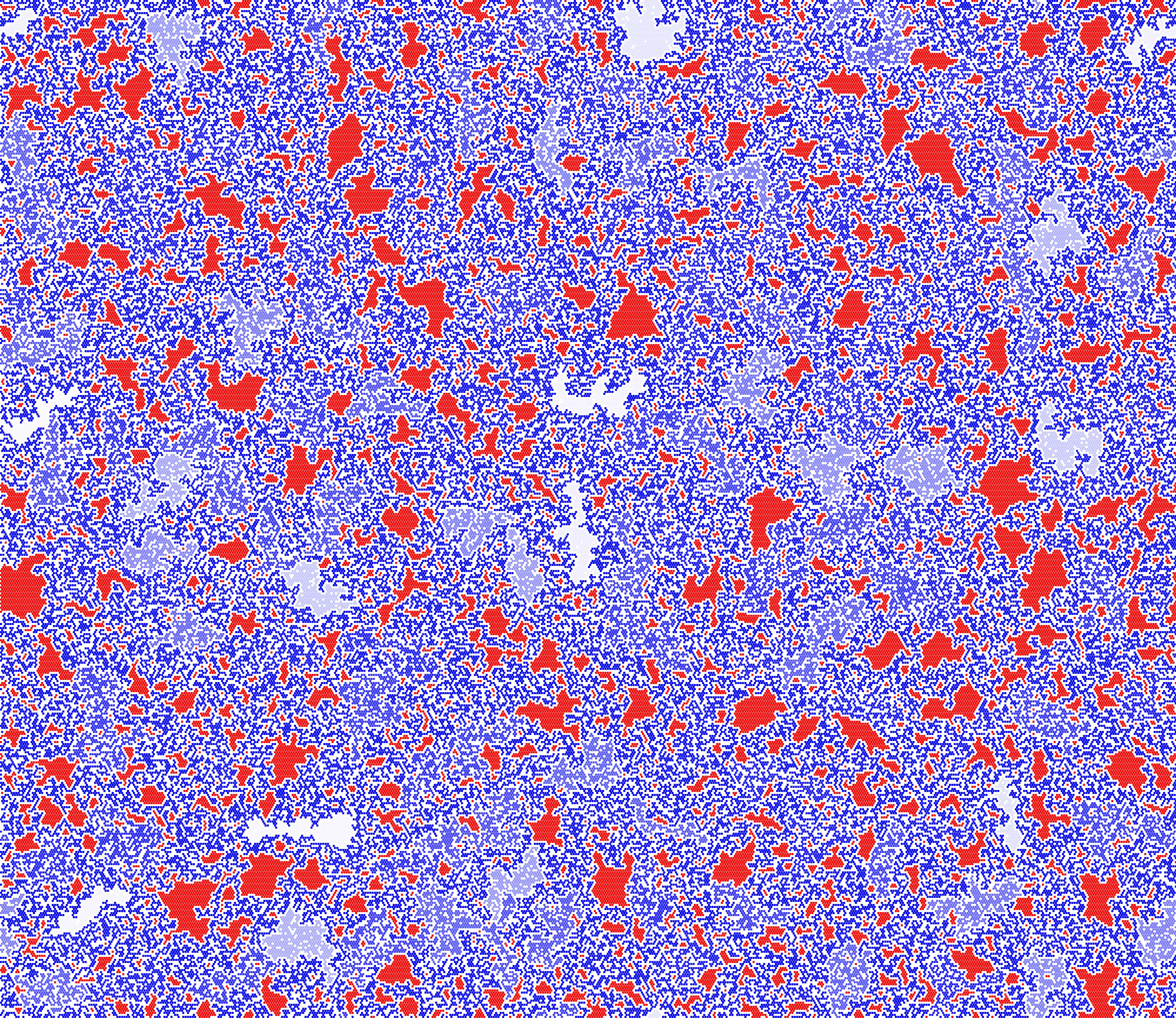}}
\hspace{0.08\textwidth}
\subfigure{\includegraphics[width=.44\textwidth]{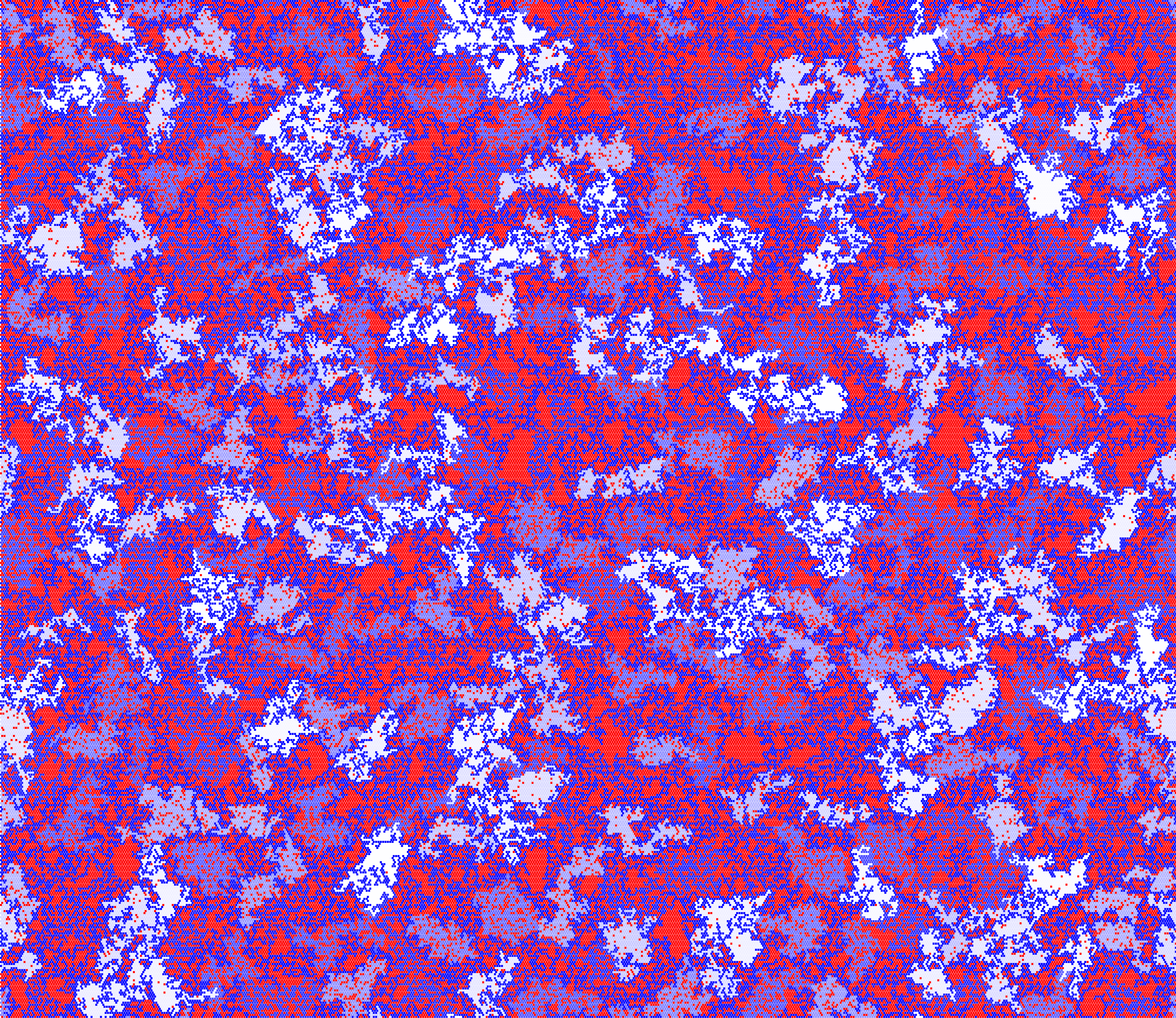}}\\

\caption{\label{fig:final_config} Final configuration for diameter-frozen percolation with parameter $N=30$ (Fig. Demeter Kiss): ``original'' process (left) and ``modified'' process (right). The blue sites are occupied and frozen (where a lighter blue corresponds to a later time of freezing), the red sites are occupied and non-frozen (i.e. trapped in a hole with diameter $< N$), and the white sites (only present in the original process) are vacant (they lie along the boundary of frozen clusters).}
\end{center}
\end{figure}


\subsection{Effect of boundary rules} \label{sec:intro_bc}

In the case of diameter-frozen percolation, we show that boundary conditions do have a strong effect. We first discuss briefly the results of \cite{Ki2015} for the original process, i.e. when the vertices along the boundary of a frozen cluster stay white forever. In that paper, it is proved that frozen clusters only arise in a near-critical window around $p_c$: for every fixed $K > 1$,
\begin{equation}
\liminf_{N \to \infty} \Pdiam \big( \text{some vertex in } [-KN, KN]^2 \text{ freezes outside } [p_{-\lambda}(N), p_{\lambda}(N)] \big) \stackrel{\lambda \to \infty}{\longrightarrow} 0,
\end{equation}
where $p_{\lambda}(N) \simeq p_c + \frac{\lambda}{N^{3/4}}$ refers to the usual near-critical parameter scale (a precise definition requires the introduction of more percolation notation, and is postponed to Section \ref{sec:def_nc_parameter}). Also, macroscopic non-frozen clusters asymptotically have full density:
\begin{equation} \label{eq:macro_non_frozen_orig}
\liminf_{N \to \infty} \Pdiam \big( \diam ( \cluster_1(0) ) \in [\ve N, (1 - \ve) N] \big) \stackrel{\ve \to 0^+}{\longrightarrow} 1,
\end{equation}
where $\cluster_1(0)$ denotes the black cluster of $0$ at time $1$ (which we consider to be $\emptyset$ if $0$ is not black). In particular, $\Pdiam ( 0 \text{ freezes}) \to 0$ as $N \to \infty$.

In contrast, we prove the following result for the modified process. If we allow the white vertices along the boundary of a frozen cluster to become black later (and to possibly freeze), then some very dense frozen clusters form at a late time (close to $1$).

\begin{theorem} \label{thm:bc_diam}
For the modified diameter-frozen percolation process on $\TT$,
\begin{equation}
\liminf_{N \to \infty} \PPdiam ( 0 \text{ freezes} ) > 0.
\end{equation}
\end{theorem}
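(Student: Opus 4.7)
The plan is to identify a geometric event $E$, of probability uniformly bounded below in $N$, on which the modified dynamics necessarily force $0$ into a frozen cluster. In contrast with $\Pdiam$, where the white boundary of a frozen cluster stays white forever, in $\PPdiam$ these boundary sites eventually become black and can serve as ``bridges'' that push a cluster whose diameter would otherwise remain below $N$ past the freezing threshold, at a time close to $1$.

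\textbf{Step 1 (setup).} Let $\mathcal{F}$ be the set of sites frozen in the modified process up to some reference time $p^{*}=p_c+\lambda N^{-3/4}$, for $\lambda$ to be chosen large. After $p^{*}$, any further growth and freezing happens only in $\TT\setminus\mathcal{F}$. In particular, $0$ freezes iff the (nondecreasing) diameter of $0$'s black cluster first reaches $N$ at some $t^{*}\le 1$, so it suffices to construct a positive-probability event on which such a $t^{*}$ exists.

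\textbf{Step 2 (the geometric scenario and its dynamics).} Fix a small $\delta>0$. The event $E$ we target is, informally, that there exist two frozen clusters $F_1,F_2\subseteq\mathcal{F}$ whose outer boundaries run nearly parallel over a mesoscopic length $\asymp\delta N$, separated by a ``corridor'' $C\subseteq\TT\setminus\mathcal{F}$ of width one (a single row of vertices); that the connected component $B_0$ of $\TT\setminus\mathcal{F}$ adjacent to one end of $C$ contains $0$ and has diameter $(1-\delta)N$; and that $B_0\cup C$ has Euclidean diameter at least $N$ (so that $C$ extends $B_0$ by at least $\delta N$ in the diameter direction). On $E$, in the modified dynamics, $B_0$ fills with black over time without triggering any freezing (since $\diam(B_0)<N$), and then the vertices of $C$ turn black at their $\tau$-times and are progressively absorbed into $0$'s cluster through the $B_0$-end of $C$. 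As soon as the first $\asymp\delta N$ consecutive vertices of $C$ adjacent to $B_0$ are all black, the diameter of $0$'s cluster first reaches $N$, and the whole cluster (including $0$) freezes. Since the $\tau_v$ of those vertices are i.i.d.\ uniform, this happens almost surely on $E$, at a time $t^{*}=1-O(1/(\delta N))$---``highly supercritical,'' as announced in the abstract.

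\textbf{Step 3 (lower bound on $\PP(E)$; the main obstacle).} The core difficulty is to produce $(F_1,F_2,C)$ with probability bounded below uniformly in $N$. The approach is: (a) use \cite{Ki2015} to obtain positive probability of a near-critical frozen cluster $F_1$ of diameter of order $N$ near $0$, with a mesoscopic flat segment of its outer boundary; (b) condition on $F_1$ and its freezing time, and use the fact that the remaining $(\tau_v)_{v\notin F_1}$ is still i.i.d.\ uniform, together with Russo--Seymour--Welsh estimates and finite-energy / local-modification arguments (in the spirit of \cite{BKN2015, BN2015}), to force a second near-critical cluster $F_2$ to grow parallel to this segment of $F_1$, at a slightly later time still in the near-critical window, separated by a single-vertex-wide corridor $C$; (c) by translation invariance, arrange the whole configuration so that $0$ lies in the blob $B_0$ at a bounded offset from the mouth of $C$. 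The delicate point is (b): two independently grown near-critical clusters at scale $N$ do not a priori come within distance one of each other over a mesoscopic length with positive probability, so one must exploit the sequential character of the freezings in the near-critical window---analysing a suitable three-arm event in the narrow gap between the incipient boundaries---to force the alignment. Once $E$ has been built, the verification in Step 2 that no premature freezing occurs inside $B_0\cup C$ is immediate: $\diam(B_0)<N$ by design, and the longest run of consecutive blacks in the one-dimensional strip $C$ remains of length much less than $N$ until $t$ is very close to $1$.
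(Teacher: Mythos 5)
Your overall intuition is the right one (once the boundary of a frozen cluster is allowed to turn black later, it can serve as the last link that pushes a nearly-spanning structure over the diameter-$N$ threshold at a time close to $1$), but the event on which your whole argument rests does not have probability bounded below, and this is exactly the point your Step 3(b) leaves open. You require two frozen clusters whose boundaries run within distance one of each other over a length of order $\delta N$ (a width-one corridor), and in Step 3(a) even a ``mesoscopic flat segment'' of a frozen cluster boundary. Near-critical interfaces are fractal; each internal corridor site is essentially a four-arm point, and asking for order $N$ of them consecutively, i.e.\ for two distinct macroscopic interfaces to stay within distance $O(1)$ of each other along a path of length $\delta N$, is an event whose probability tends to $0$ as $N\to\infty$. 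It certainly cannot be produced by the tools you invoke: RSW gives constants per \emph{macroscopic} crossing, not per site, and finite-energy/local-modification arguments cost a factor exponentially small in the number of modified sites, here $e^{-c\delta N}$. Citing \cite{Ki2015} for the existence and boundary structure of frozen clusters is also problematic: those results concern the original process, and (as discussed in Section \ref{sec:diam_frozen_open}) their key lemmas break down for $\PPdiam$. Finally, even granting your event $E$, you never ensure that $0$ is black and attached to the merging structure at the moment the diameter first reaches $N$; if that first merger misses $0$, the frozen cluster forms without it and the argument stalls.

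The paper's proof of Theorem \ref{thm:bc_diam2} circumvents precisely the obstacle you ran into: instead of a long thin corridor, it builds (Figure \ref{fig:two_chambers}) two chambers of diameters roughly $N/2$ and $3N/4$ -- each $<N$, so nothing can freeze inside either one, but with union of diameter $\geq N$ -- whose bounding clusters are forced, by ``panhandle'' crossings blocked by $p_{-\lambda}(N)$-white paths, to freeze inside $[p_{-\lambda}(N),p_c]$; every piece of this picture costs only an RSW constant. The ``thin'' separation is reduced to \emph{single-site} bottlenecks, the passage sites: $p_c$-white sites on a white crossing having two $p_{-\lambda}(N)$-black arms, whose number is of order $N^2\pi_4(N)$ with probability bounded below by a second-moment argument for four-arm events (Lemma \ref{lem:passage_sites}). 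Having $\approx N^{3/4}$ such sites (rather than $\approx N$, as in your corridor) is exactly what makes ``at least one passage site still white at time $1-\ve/(N^2\pi_4(N))$'' an event of probability bounded below, which produces the late freezing time; and the net of Lemma \ref{lem:net}, the connection event for $0$, and the boundary-regularity event of Figure \ref{fig:regularity_boundary} are what guarantee that $0$ actually belongs to the cluster that freezes when the two chambers finally connect. So the gap in your proposal is not a technicality: the replacement of the order-$N$ coincidence of interfaces by order-$N^{3/4}$ one-site passages is the core idea of the proof, and it is missing from your argument.
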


\begin{remark} ~
\begin{itemize}
\item[(a)] Actually, the following more precise property holds: for all $\ve > 0$,
\begin{equation}
\liminf_{N \to \infty} \PPdiam \big( 0 \text{ freezes in } \big( 1 - N^{-\frac{3}{4} + \ve}, 1 \big) \big) > 0.
\end{equation}

\item[(b)] The construction used in the proof of Theorem \ref{thm:bc_diam} also provides more information about the final configuration. In the scenario that we give, and which occurs with a probability bounded away from $0$ (as $N \to \infty$), $0$ lies in a macroscopic ``chamber'' (with diameter smaller than $N$ but of order $N$) which is ``protected from the outside'' until time $1 - N^{-\frac{3}{4} + \ve}$. In that scenario, $0$ lies in a highly supercritical cluster which freezes at some time $p^* \in \big( 1 - N^{-\frac{3}{4} + \ve}, 1 \big)$. In particular, our proof implies that for every $\ve > 0$, there is a probability bounded away from $0$ that all the sites within distance $N^{\frac{3}{8} - \ve}$ from $0$ are frozen.
\end{itemize}
\end{remark}

In the last section we discuss, briefly and informally, the influence of boundary conditions for volume-frozen percolation. Roughly speaking and contrary to what happens in the diameter case, we do not expect the choice of boundary conditions to have a considerable effect in the volume case. In some sense, this case seems to be more ``robust''.


\subsection{Organization of the paper}

We first discuss independent percolation in Section \ref{sec:percolation}. After fixing notations, we collect tools from critical and near-critical percolation which are central in our proofs. We then study the modified diameter-frozen percolation process in Section \ref{sec:diam_frozen}, where we prove Theorem \ref{thm:bc_diam}. For that, we use an ``ad-hoc'' configuration of near-critical clusters (see Figure \ref{fig:two_chambers} below). We prove that such a configuration occurs with reasonable probability, and (combined with some extra features, see Figure \ref{fig:regularity_boundary}) gives a scenario where $0$ freezes. Finally, we make a few comments on modified volume-frozen percolation in Section \ref{sec:vol_frozen}.

\section{Preliminary: independent percolation} \label{sec:percolation}

\subsection{Setting and notations}

In this section, we first fix some notations regarding site percolation on $\TT$. For a subset $A \subseteq V(\TT)$, we denote by $\din A := \{v \in A \: : \: v \sim v'$ for some $v' \in A^c\}$ its inner boundary, and by $\dout A := \din (A^c)$ its outer boundary.

A path of length $k$ ($k \geq 1$) is a sequence of vertices $v_0 \sim v_1 \sim \ldots \sim v_k$. Two vertices $v, v' \in V(\TT)$ are said to be connected if for some $k \geq 1$, there exists a path of length $k$ from $v$ ot $v'$ (i.e. such that $v_0 = v$ and $v_k = v'$) containing only black sites: we denote this event by $v \lra v'$. More generally, two subsets $A, A' \subseteq V(\TT)$ are said to be connected if there exist $v \in A$ and $v' \in A'$ such that $v \lra v'$, which we denote by $A \lra A'$. Note that we sometimes consider white paths: in this case, the color is always specified explicitly, and we use the notation $\lra^*$.

The event that a vertex $v$ belongs to an infinite black cluster is denoted by $v \lra \infty$, and we write
\begin{equation} \label{eq:def_theta}
\theta(p) := \PP_p(0 \lra \infty).
\end{equation}
For a rectangle $R = [x_1,x_2] \times [y_1,y_2]$, a black path connecting the left and right (resp. top and bottom) sides is called a horizontal (resp. vertical) crossing, and the event that such a path exists is denoted by $\Ch(R)$ (resp. $\Cv(R)$). We also write $\Ch^*(R)$ and $\Cv^*(R)$ for the similar events with white paths.

Let $\Ball_n := [-n,n]^2$ be the ball of radius $n > 0$ around $0$ for $\|.\|$, and for $0 < m < n$, let $\Ann_{m,n} := \Ball_n \setminus \Ball_m$. For $z \in \CC$, we write $\Ball_n(z) := z + \Ball_n$, and $\Ann_{m,n}(z) := z + \Ann_{m,n}$. For such an annulus $A = \Ann_{m,n}(z)$, we denote by $\circuitevent(A)$ (resp. $\circuitevent^*(A)$) the event that there exists a black (resp. white) circuit in $A$ surrounding $\Ball_m(z)$. If $k \geq 1$ and $\sigma \in \colorseq_k := \{b,w\}^k$ (where we write $b$ for black, and $w$ for white), we also define the event $\arm_{\sigma}(A)$ that there exist $k$ disjoint paths $(\gamma_i)_{1 \leq i \leq k}$ in $A$ with respective colors $\sigma_i$, in counter-clockwise order, each ``crossing'' $A$ (i.e. connecting $\din \Ball_n(z)$ and $\dout \Ball_m(z)$). We also use the notations
\begin{equation} \label{eq:def_pi}
\arm_{\sigma}(m,n) := \arm_{\sigma}(\Ann_{m,n}) \quad \text{and} \quad \pi_{\sigma}(m,n) := \PP_{p_c}\big( \arm_{\sigma}(m,n) \big),
\end{equation}
and we simply write $\pi_{\sigma}(n) := \pi_{\sigma}(0,n)$ when $m = 0$. For $k \geq 1$, we also use the shorthand notations $\arm_k$ and $\pi_k$ in the particular case when $\sigma =(bwb\ldots) \in \colorseq_k$ is alternating (i.e the color sequence ends with $\sigma_{k} = b$ or $w$ according to the parity of $k$).

We will use repeatedly the usual Harris inequality for monotone events, and in some cases, we will need the slightly more general version below (see Lemma 3 in \cite{Ke1987}), for ``locally monotone'' events.
\begin{lemma} \label{lem:Harris_gen}
Consider $\calE^+$, $\tilde{\calE}^+$ two increasing events, $\calE^-$, $\tilde{\calE}^-$ two decreasing events, and assume that for some disjoint subsets $A, A^+, A^- \subseteq V(\TT)$, these events depend only on the sites in $A \cup A^+$, $A^+$, $A \cup A^-$, and $A^-$, respectively. Then
$$\PP \big( \tilde{\calE}^+ \cap \tilde{\calE}^- \cap \calE^+ \cap \calE^- \big) \geq \PP \big( \tilde{\calE}^+ \big) \PP \big( \tilde{\calE}^- \big) \PP \big( \calE^+ \cap \calE^- \big).$$
\end{lemma}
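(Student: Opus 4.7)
The crucial observation is that $A$ is the only block of coordinates on which both the increasing events ($\tilde{\calE}^+,\calE^+$) and the decreasing events ($\tilde{\calE}^-,\calE^-$) can jointly depend: on $A^+$ everything is increasing, on $A^-$ everything is decreasing. The strategy is therefore to condition on the configuration $\omega_A$ on $A$, apply the standard Harris/FKG inequality separately on each of $A^+$ and $A^-$, and finally integrate.

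Fix $\omega_A$ and work under the conditional law $\PP(\,\cdot \mid \omega_A)$. Because $(\tau_v)_{v \in V(\TT)}$ is a product measure and $A,A^+,A^-$ are pairwise disjoint, the two families $(\tau_v)_{v\in A^+}$ and $(\tau_v)_{v\in A^-}$ are still independent product measures under this conditioning. The event $\tilde{\calE}^+$ depends only on $\omega_{A^+}$, so $\PP(\tilde{\calE}^+ \mid \omega_A) = \PP(\tilde{\calE}^+)$; the event $\calE^+$, originally depending on $A \cup A^+$, becomes a function of $\omega_{A^+}$ alone once $\omega_A$ is frozen, and it remains increasing in those coordinates (since monotonicity on $A \cup A^+$ implies coordinatewise monotonicity). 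The usual Harris inequality applied to the product measure on $A^+$ therefore gives
\[
\PP(\tilde{\calE}^+ \cap \calE^+ \mid \omega_A) \;\geq\; \PP(\tilde{\calE}^+) \, \PP(\calE^+ \mid \omega_A),
\]
and the analogous computation on $A^-$ (for two decreasing events) yields
\[
\PP(\tilde{\calE}^- \cap \calE^- \mid \omega_A) \;\geq\; \PP(\tilde{\calE}^-) \, \PP(\calE^- \mid \omega_A).
\]

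To combine the two bounds I would use the conditional independence of the $A^+$-block and the $A^-$-block given $\omega_A$, which lets me factor
\[
\PP(\tilde{\calE}^+ \cap \tilde{\calE}^- \cap \calE^+ \cap \calE^- \mid \omega_A) = \PP(\tilde{\calE}^+ \cap \calE^+ \mid \omega_A) \cdot \PP(\tilde{\calE}^- \cap \calE^- \mid \omega_A).
\]
Multiplying the two FKG estimates and recognizing $\PP(\calE^+ \mid \omega_A)\,\PP(\calE^- \mid \omega_A) = \PP(\calE^+ \cap \calE^- \mid \omega_A)$ (again by the same conditional independence), it then suffices to integrate over $\omega_A$ and use the tower property to obtain the stated inequality.

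I do not anticipate any genuine obstacle; the argument is a one-step reduction of locally monotone FKG to the standard monotone case, the cost being exactly the conditioning on $A$. The only small subtlety worth flagging is the preservation of monotonicity of $\calE^\pm$ after conditioning, which is immediate from coordinatewise monotonicity, together with the need to track disjointness of $A, A^+, A^-$ carefully so that the two conditional FKG bounds can be multiplied without loss.
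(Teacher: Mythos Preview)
Your proof is correct and follows exactly the approach sketched in the paper: condition on the configuration in $A$, apply the Harris inequality once on $A^+$ (for the two increasing events) and once on $A^-$ (for the two decreasing events), use the conditional independence of the $A^+$ and $A^-$ blocks to factor, and integrate. The only additional content in your write-up is the explicit verification that monotonicity is preserved after freezing $\omega_A$ and that the conditional independence allows the two FKG bounds to be multiplied, both of which are routine.
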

This result follows easily by first conditioning on the configuration in $A$, and then applying twice the Harris inequality (to the configuration in $A^+$ and in $A^-$).

\subsection{Critical and near-critical percolation}

Our results are based on a precise description of the behavior of percolation through its phase transition, i.e. at and near criticality. We now collect classical properties of near-critical percolation which are used throughout the proofs. We define the characteristic length $L$ by: for $p < p_c = \frac{1}{2}$,
\begin{equation} \label{eq:def_L}
L(p) = \min \big\{ n > 0 \: : \: \PP_p \big( \Cv( [0,2n] \times [0,n] ) \big) \leq 0.01 \big\},
\end{equation}
and $L(p) = L(1-p)$ for $p > p_c$. We also set $L(p_c) = \infty$.

\begin{itemize}
\item[(i)] \emph{Russo-Seymour-Welsh (RSW) bounds.} For all $k \geq 1$, there exists a universal constant $\delta_k > 0$ such that: for all $p \in (0,1)$, and $n \leq L(p)$,
\begin{equation} \label{eq:RSW}
\PP_p \big( \Ch( [0,kn] \times [0,n] ) \big) \geq \delta_k \quad \text{and} \quad \PP_p \big( \Ch^*( [0,kn] \times [0,n] ) \big) \geq \delta_k.
\end{equation}

\item[(ii)] \emph{Exponential decay with respect to L(p).} There exist universal constants $c_i, c'_i > 0$ ($i \in \{1,2\}$) such that: for all $p < p_c$, and $n \geq 1$,
\begin{equation} \label{eq:exp_decay}
\PP_p \big( \Cv( [0,2n] \times [0,n] ) \big) \leq c_1 e^{- c_2 \frac{n}{L(p)}} \quad \text{and} \quad \PP_p \big( \Ch( [0,2n] \times [0,n] ) \big) \geq c'_1 e^{- c'_2 \frac{n}{L(p)}}
\end{equation}
(see Lemma 39 in \cite{No2008}).

\item[(iii)] \emph{A-priori bounds on arm events.} There exist universal constants $c, \beta > 0$ such that: for all $p \in (0,1)$, and $0 < m < n \leq L(p)$,
\begin{equation} \label{eq:1arm}
\PP_p \big( \arm_1(m,n) \big) \leq c \bigg( \frac{m}{n} \bigg)^{\beta}.
\end{equation}
It follows from this bound, the ``universal'' arm exponent for $\arm_5$ which equals $2$ (see e.g. Theorem 24 (3) in \cite{No2008}), and Reimer's inequality, that for some universal constants $c', c'' > 0$: for all $p \in (0,1)$, and $0 < m < n \leq L(p)$,
\begin{equation} \label{eq:46arms}
\PP_p \big( \arm_4(m,n) \big) \geq c' \bigg( \frac{m}{n} \bigg)^{2 - \beta} \quad \text{and} \quad \PP_p \big( \arm_6(m,n) \big) \leq c'' \bigg( \frac{m}{n} \bigg)^{2 + \beta}.
\end{equation}

\item[(iv)] \emph{Asymptotic equivalences.} For the functions $\theta$ and $L$, the following estimates hold:
\begin{equation} \label{eq:equiv_theta}
\theta(p) \asymp \pi_1(L(p)) \quad \text{as $p \searrow p_c$}
\end{equation}
(see Theorem 2 in \cite{Ke1987}, or (7.25) in \cite{No2008}), and
\begin{equation} \label{eq:equiv_L}
\big| p - p_c \big| L(p)^2 \pi_4 \big( L(p) \big) \asymp 1 \quad \text{as $p \to p_c$}
\end{equation}
(see (4.5) in \cite{Ke1987}, or Proposition 34 in \cite{No2008}).
\end{itemize}

For site percolation on the triangular lattice $\TT$, conformal invariance at criticality (in the scaling limit) was proved by Smirnov \cite{Sm2001}. This property can be used to describe the scaling limit of critical percolation in terms of the Schramm-Loewner Evolution (SLE) processes (with parameter $6$), introduced in \cite{Sc2000}. It then allows one (using also \cite{LSW_2001a, LSW_2001b}) to derive arm exponents \cite{LSW2002, SW2001}.

\begin{itemize}
\item[(v)] \emph{Arm events at criticality.} For all $k \geq 1$, and $\sigma \in \colorseq_k$, there exists $\alpha_{\sigma} > 0$ such that
\begin{equation} \label{eq:arm_exponent}
\pi_{\sigma}(k,n) = n^{- \alpha_{\sigma} + o(1)} \quad \text{as $n \to \infty$.}
\end{equation}
Moreover, the value of $\alpha_{\sigma}$ is known, except in the monochromatic case (for $k \geq 2$ arms of the same color).
\begin{itemize}
\item For $k=1$, $\alpha_{\sigma} = \frac{5}{48}$.

\item For all $k \geq 2$, and $\sigma \in \colorseq_k$ containing both colors, $\alpha_{\sigma} = \frac{k^2-1}{12}$.
\end{itemize}
\end{itemize}

\subsection{Additional results}

In our proofs, we also make use two more specific results that we now state.

First, we need a ``separation'' property for $4$-arm events. For $n \geq 1$ and $p, p' \in (0,1)$, we denote by $\tilde{\arm}_4^{p,p'}(n)$ the event that there exist four paths $\gamma_i$ ($1 \leq i \leq 4$), in counterclockwise order, connecting $\partial 0$ to the right, top, left, and bottom sides of $\Ball_{3n}$, respectively, and such that
\begin{itemize}
\item $\gamma_1$ and $\gamma_3$ are $p$-black and stay in (resp.) $[-n,3n] \times [-n,n]$ and $[-3n,n] \times [-n,n]$,
\item $\gamma_2$ and $\gamma_4$ are $p'$-white and stay in (resp.) $[-n,n] \times [-n,3n]$ and $[-n,n] \times [-3n,n]$.
\end{itemize}
Then, it follows from classical results about near-critical percolation that there exist universal constants $c_1, c_2 > 0$ such that: for all $p,p' \in (0,1)$, and $1 \leq n \leq \min(L(p),L(p'))$,
\begin{equation} \label{eq:sep_4arms}
c_1 \pi_4(n) \leq \PP \big( \tilde{\arm}_4^{p,p'}(n) \big) \leq c_2 \pi_4(n).
\end{equation}

We also make use of the following geometric construction.

\begin{definition} \label{def:net}
For $1 \leq m \leq n$, we consider all the horizontal and vertical rectangles of the form
$$\Ball_m(2mx) \cup \Ball_m(2mx'), \quad \text{with } x, x' \in \Ball_{\lceil n / 2m \rceil + 1}, \: x \sim x'$$
(covering the ball $\Ball_{n+2m}$), and for $p \in (0,1)$, we denote by $\net_p(m,n)$ the event that in each of these rectangles, there exists a $p$-black crossing in the long direction.
\end{definition}

Note that the event $\net_p(m,n)$ implies the existence of a $p$-black cluster $\net$ such that all the $p$-black clusters and all the $p$-white clusters that intersect $\Ball_n$, except $\net$ itself, have a diameter at most $4m$. In the following, such a cluster $\net$ is called a \emph{net with mesh $m$}.

\begin{lemma} \label{lem:net}
There exist universal constants $c_1, c_2 > 0$ such that: for all $1 \leq m \leq n$ and $p > p_c$,
\begin{equation} \label{eq:net}
\PP \big( \net_p(m,n) \big) \geq 1 - c_1 \Big( \frac{n}{m} \Big)^2 e^{-c_2 \frac{m}{L(p)}}.
\end{equation}
\end{lemma}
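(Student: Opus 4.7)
The plan is to derive (\ref{eq:net}) by a direct union bound over the rectangles entering the definition of $\net_p(m,n)$, estimating each individual crossing probability via the exponential decay statement (\ref{eq:exp_decay}) together with self-duality. The geometric assertion that the resulting crossings connect into a single cluster (the net) is not needed for the probability estimate itself.

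First, I count the rectangles. The centers $x$ range over the lattice ball $\Ball_{\lceil n/(2m)\rceil + 1}$, which contains $O((n/m)^2)$ points; each such $x$ has a bounded number of axial neighbors $x'$ giving rise to a horizontal or vertical rectangle, so the total number of rectangles is $O((n/m)^2)$. Next, for a single rectangle, say the horizontal one $R := [0,4m] \times [0,2m]$ (the vertical case being symmetric by a rotation), the ``long direction'' crossing is the event $\Ch(R)$. By the standard self-duality of site percolation on $\TT$,
$$\PP_p\big(\neg \Ch(R)\big) \;=\; \PP_p\big(\Cv^*(R)\big) \;=\; \PP_{1-p}\big(\Cv(R)\big).$$
Since $p > p_c$ gives $1-p < p_c$, I may apply (\ref{eq:exp_decay}) at parameter $1-p$ to the rectangle $[0, 2\cdot 2m] \times [0, 2m]$, obtaining
$$\PP_p\big(\neg \Ch(R)\big) \;\leq\; c_1 e^{-c_2 \cdot 2m / L(1-p)} \;=\; c_1 e^{-2c_2 m / L(p)},$$
where the last equality uses the convention $L(1-p) = L(p)$ for $p > p_c$.

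Finally, a union bound over all $O((n/m)^2)$ rectangles yields
$$\PP\big(\neg \net_p(m,n)\big) \;\leq\; C \Big(\frac{n}{m}\Big)^{\!2} e^{-2c_2 m / L(p)},$$
which is precisely (\ref{eq:net}) after renaming constants. I do not foresee any substantial obstacle here: the lemma is essentially a packaging of exponential decay with a union bound, and the only point of mild care is handling the self-duality so that the exponential decay estimate is invoked at the subcritical parameter $1-p$ rather than directly at the supercritical $p$, which is legitimate thanks to the symmetric definition $L(p) := L(1-p)$ in the supercritical regime.
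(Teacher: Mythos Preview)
Your proof is correct and follows exactly the same approach as the paper's, which simply says the bound is ``an immediate consequence of the exponential decay property \eqref{eq:exp_decay}, since the definition of $\net_p(m,n)$ involves of order $\big(\frac{n}{m}\big)^2$ rectangles, each with side lengths $4m$ and $2m$.'' You have just made the duality step explicit.
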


\begin{proof}[Proof of Lemma \ref{lem:net}]
This is an immediate consequence of the exponential decay property \eqref{eq:exp_decay}, since the definition of $\net_p(m,n)$ involves of order $\big( \frac{n}{m} \big)^2$ rectangles, each with side lengths $4m$ and $2m$.
\end{proof}

\subsection{Near-critical parameter scale} \label{sec:def_nc_parameter}

For the constructions in Section \ref{sec:diam_frozen}, the following near-critical parameter scale (already mentioned in Section \ref{sec:intro_bc}) is convenient to work with.
\begin{definition}
For $\lambda \in \RR$ and $N \geq 1$, let
\begin{equation} \label{eq:def_nc_scale}
p_{\lambda}(N) := p_c + \frac{\lambda}{N^2 \pi_4(N)}.
\end{equation}
\end{definition}
This particular choice has turned out to be quite suitable to study near-critical percolation and related phenomena, see e.g. \cite{NW2009, GPS2013a, Ki2015}. Note that for every fixed $\lambda$, $p_{\lambda}(N) \to p_c$ as $N \to \infty$ (using the a-priori lower bound on $4$-arm events \eqref{eq:46arms}). In particular, $p_{\lambda}(N) \in (0,1)$ for $N$ large enough. We use the following properties.
\begin{itemize}
\item[(i)] For every fixed $\lambda \in \RR$,
\begin{equation}
L \big( p_{\lambda}(N) \big) \asymp N \quad \text{as $N \to \infty$.}
\end{equation}

\item[(ii)] On the other hand,
\begin{equation} \label{eq:prop2_nc}
\limsup_{N \to \infty} \frac{L \big( p_{\lambda}(N) \big)}{N} \longrightarrow 0 \quad \text{as $\lambda \to \pm \infty$.}
\end{equation}

\item[(iii)] \emph{RSW bounds.} For all $\lambda \geq 0$ and $k \geq 1$, there exists a constant $\bar{\delta}_k = \bar{\delta}_k(\lambda) > 0$ such that: for all $N \geq 1$, $n \leq N$, and $p \in \big[ p_{-\lambda}(N), p_{\lambda}(N) \big]$,
\begin{equation} \label{eq:RSW_nc}
\PP_p \big( \Ch( [0,kn] \times [0,n] ) \big) \geq \bar{\delta}_k \quad \text{and} \quad \PP_p \big( \Ch^*( [0,kn] \times [0,n] ) \big) \geq \bar{\delta}_k.
\end{equation}
\end{itemize}

Properties (i) and (ii) follow from the definition of $p_{\lambda}$ \eqref{eq:def_nc_scale}, the asymptotic equivalence for $L$ \eqref{eq:equiv_L}, and \eqref{eq:46arms} for $4$ arms. Property (iii) then follows from (i) and (multiple use of) \eqref{eq:RSW}.

\section{Diameter-frozen percolation} \label{sec:diam_frozen}

We now turn our attention to diameter-frozen percolation. We recall in Section \ref{sec:diam_frozen_chamber} the proof of the main result in \cite{BLN2012}, which also applies to the modified diameter-frozen percolation process. This proof uses a construction which is the starting point of the more complicated construction in the proof of Theorem \ref{thm:bc_diam2}, in Section \ref{sec:diam_frozen_proof}. Finally, we discuss some open questions in Section \ref{sec:diam_frozen_open}.

\subsection{Construction of macroscopic chambers} \label{sec:diam_frozen_chamber}

First, we note that the main construction from \cite{BLN2012} (see Theorem 1.1 and Figure 1 in that paper) works in exactly the same way for the process with modified boundary conditions. Hence, we obtain the analog of the main result in \cite{BLN2012}: for all $0 < a < b < 1$,
\begin{equation} \label{eq:macro_non_frozen}
\liminf_{N \to \infty} \PPdiam \big( \diam ( \cluster_1(0) ) \in [aN, bN] \big) > 0.
\end{equation}
In other words, macroscopic non-frozen clusters (with diameter $< N$ but of order $N$) have a positive density.

Since we are using this construction as a building block for the proof of Theorem \ref{thm:bc_diam}, we remind it quickly to the reader on Figure \ref{fig:one_chamber}. For that, we choose $\eta, \ell \in (0, 1)$ such that
\begin{itemize}
\item $a + 6 \eta \leq b$ (so that the inner chamber has a diameter between $aN$ and $bN$),

\item $a + 7 \eta < 1$ and $\ell + 4 \eta < 1$ (so that $\circuit$ and the crossing in $r_1$ cannot freeze separately),

\item and $\ell + a + 4 \eta > 1$ (so that the big structure, that contains both $\circuit$ and part of the crossing, freezes before time $p_c$). Note that the $p_{-\lambda}(N)$-white crossing of $r'_1$ prevents that part of $\circuit$ freezes already with the crossing of $r_1$ before every site of $\circuit$ is black.
\end{itemize}
We can choose for instance $\ell = 1 - a$, and then $\eta > 0$ small enough. This construction creates a cluster which freezes at some time in $[p_{-\lambda}(N), p_c]$, and completely surrounds $\Ball_{\frac{a}{2} N}$ (without intersecting it). In this ``chamber'' with diameter $<N$, no connected component can freeze and we just observe an independent percolation configuration. In particular, all the sites are black at time $1$, which produces a non-frozen cluster with a diameter between $aN$ and $bN$.

\begin{figure}
\begin{center}

\includegraphics[width=.75\textwidth]{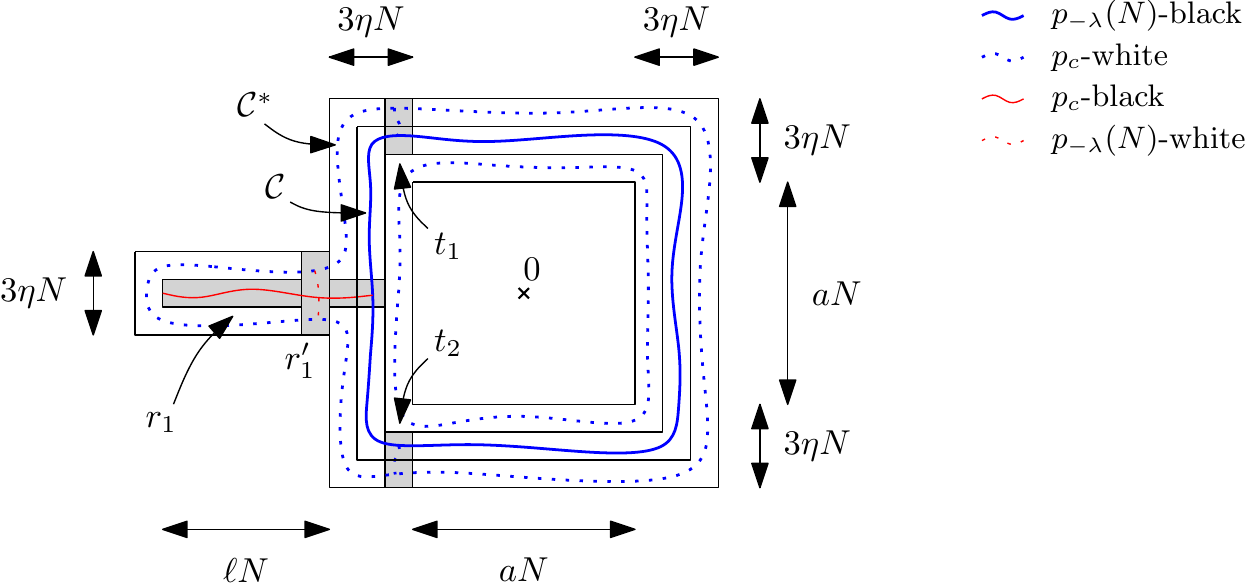}
\caption{\label{fig:one_chamber} Construction used to prove \eqref{eq:macro_non_frozen}, where all the ``corridors'' have width $\eta N$. When the big structure (containing $\circuit$ and part of the crossing in $r_1$) freezes, it leaves a hole whose boundary lies in $\Ann_{\frac{a}{2} N,(\frac{a}{2} + 3 \eta) N}$. Note that the $p_c$-white crossings in $t_1$ and $t_2$ prevent the appearance of big clusters other than the ones that we want to be created.}

\end{center}
\end{figure}

\subsection{Existence of highly supercritical frozen clusters} \label{sec:diam_frozen_proof}

We now prove Theorem \ref{thm:bc_diam} about the appearance of clusters freezing at a time very close to $1$. We actually prove the (slightly more precise) result below. Theorem \ref{thm:bc_diam} easily follows from it by plugging in the value $\alpha_4 = \frac{5}{4}$ of the $4$-arm exponent (see \eqref{eq:arm_exponent} and the paragraph below).

\begin{theorem} \label{thm:bc_diam2}
Consider the modified diameter-frozen percolation process on $\TT$. For every $\ve > 0$,
\begin{equation} \label{eq:bc_diam2}
\liminf_{N \to \infty} \PPdiam \bigg( 0 \text{ freezes in } \bigg( 1 - \frac{\ve}{N^2 \pi_4(N)}, 1 \bigg) \bigg) > 0.
\end{equation}
\end{theorem}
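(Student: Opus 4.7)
My plan is to extend the single-chamber construction of Section~\ref{sec:diam_frozen_chamber} to produce, with probability bounded away from $0$ uniformly in $N$, a specific ``dumbbell'' configuration (Figure~\ref{fig:two_chambers}) around the origin. Concretely, I would build an outer frozen cluster $F$, formed at some time $p_0 \in [p_{-\lambda}(N), p_c]$ for a large fixed $\lambda$, whose complement near the origin is a dumbbell $D = R_1 \cup T \cup R_2$: two ``lobes'' $R_1 \ni 0$ and $R_2$ of diameters of order $N$ but each strictly less than $N$, joined by a straight tube $T$ of width exactly $1$. The tube has length $\ell := \lfloor c_0 / \delta \rfloor$, where $\delta := \varepsilon / (N^2 \pi_4(N))$ and $c_0 > 0$ is a constant to be fixed, and the lobes are positioned so that $\diam(D) \geq N$. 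This dumbbell is obtained by running the single-chamber construction of Section~\ref{sec:diam_frozen_chamber} around each of $R_1$ and $R_2$ and gluing the two outer rings along a narrow corridor surrounding $T$, so that the whole outer structure fuses into one frozen cluster at time $p_0$. An additional ``regularity of the boundary'' event (Figure~\ref{fig:regularity_boundary}) is built into the construction to guarantee that the inner boundary of $F$ hugs $T$ on both of its long sides; in particular $T$ remains a width-$1$ channel flanked by frozen sites. The whole event is controlled by combining the Harris inequality (Lemma~\ref{lem:Harris_gen}) and the near-critical RSW bounds \eqref{eq:RSW_nc}.

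Given the construction, the sites in $D$ evolve as independent percolation and are independent of the event defining $F$. The width-$1$ property of $T$ (with frozen sites on both sides) means that a black path from $R_1$ to $R_2$ inside $D$ must use \emph{every} site of $T$. Hence, before time
\[
p_T := \max_{v \in T} \tau_v
\]
the cluster of $0$ is confined to $R_1$ and has diameter $< N$, so $0$ does not freeze; and at time $p_T$ the cluster extends through $T$ into $R_2$, producing a cluster of diameter $\geq \diam(D) \geq N$, which freezes and thereby freezes $0$.

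The probability that this happens in the required window is then immediate: since $p_T$ is the maximum of $\ell$ independent Uniform$(0,1)$ variables,
\[
\PP\big( p_T > 1 - \delta \big) \; = \; 1 - (1 - \delta)^{\ell} \; \longrightarrow \; 1 - e^{-c_0} \; > \; 0 \qquad \text{as } N \to \infty.
\]
On the event $\{ p_T > 1 - \delta \}$, $p_T$ is highly supercritical, so with probability tending to $1$ the iid configuration in $R_i$ at parameter $p_T$ has a unique giant cluster containing both $0$ (in $R_1$) and the corresponding mouth of $T$; a net argument (Lemma~\ref{lem:net}) makes this quantitative. Combining the independent contributions from the construction event for $F$, from $T$, and from each $R_i$, one obtains the asserted lower bound on $\PPdiam\big( 0 \text{ freezes in } (1 - \delta, 1) \big)$.

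The main obstacle is the first step: combining the many near-critical circuits and crossings around two adjacent lobes into a single outer frozen cluster that (i) freezes before $p_c$, (ii) preserves $T$ as a width-$1$ hole in $F$, and (iii) achieves the inner-boundary regularity along $T$. Item (iii) is the most delicate: microscopic roughness of the inner boundary of $F$ could otherwise either absorb sites of $T$ into the frozen cluster, or leave lateral gaps on the sides of $T$ through which a black path could bypass $T$ altogether, breaking the width-$1$ property on which the probability estimate hinges. Ruling out earlier freezing events inside $D$ is a secondary issue, handled by the diameter bounds $\diam(R_i) < N$ together with the width-$1$ property of $T$.
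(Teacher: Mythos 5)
Your overall mechanism is the same as the paper's: create two macroscopic chambers, each of diameter $<N$ but with union of diameter $\geq N$, separated by a bottleneck of roughly $N^2\pi_4(N)$ single-site ``gates'' that must \emph{all} turn black before the two chambers merge, so that the merging (and hence freezing) time is the maximum of that many uniform variables and exceeds $1-\varepsilon/(N^2\pi_4(N))$ with probability bounded below; your computation $1-(1-\tilde\ve)^{\ell}\to 1-e^{-c_0}$ is exactly the paper's count for ``at least one passage site still white at time $1-\tilde\ve$''. You also correctly anticipate the boundary-regularity issue (the paper's Figure \ref{fig:regularity_boundary}).

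The genuine gap is in how you produce the bottleneck. You prescribe a deterministic width-$1$ tube $T$ of length $\ell\asymp N^2\pi_4(N)\approx N^{3/4}$ whose two long sides are hugged by the frozen cluster and whose $\ell$ interior sites stay white until the walls freeze near $p_c$. Such a configuration cannot occur with probability bounded away from $0$: it prescribes the states (and connectivity) of order $N^{3/4}$ individual lattice sites, each costing at least a constant factor (already the requirement that all $\ell$ tube sites be $p_c$-white has probability $2^{-\ell}$), so the event has probability at most $e^{-cN^{3/4}}$, and no RSW/Harris argument can impose lattice-scale geometry over a mesoscopic length at constant cost. This is not a ``delicate'' point to be fixed later, as you suggest, but the place where the construction fails, and it is precisely where the paper's key idea enters: the single-site pinches are not prescribed but are obtained for free from near-critical geometry, as ``passage sites'' --- $p_c$-white sites on a white crossing of a macroscopic square, each carrying a four-arm structure whose two $p_{-\lambda}(N)$-black arms attach to the (soon frozen) walls, so that each such site together with its frozen arms forms a cut that is frozen except at that one site. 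Lemma \ref{lem:passage_sites} (a second-moment argument based on $\PP(\bar\arm_4)\asymp\pi_4(n)$) yields $\geq\delta N^2\pi_4(N)$ of them with probability $\geq\delta$; this is exactly why the scale $N^2\pi_4(N)$ appears in the statement --- it is the natural number of pivotal-type points in a box of size $N$, not a tube length one can choose. The same device is needed at the squares $s_1,s_2$ to keep the chambers sealed from the \emph{outside} until time $1-\tilde\ve$, an issue your ``complement is exactly the dumbbell'' assumption hides but does not resolve, since a completely sealed one-piece wall flanking a width-$1$ tube runs into the same exponential cost.
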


\subsubsection{Passage sites}

We now introduce an event which is instrumental in the proof of Theorem \ref{thm:bc_diam2}.

\begin{definition}
Let $n \geq 1$, $\delta>0$, and $p \in (0, p_c]$. We define the event $\Gamma_p^{n,\delta}$, depending on the sites in the box $\Ball_{3n}$, that there exists a vertical crossing $\gamma$ of $[-n,n] \times [-3n,3n]$ with the following two properties (see Figure \ref{fig:def_gamma}).
\begin{itemize}
\item[(i)] $\gamma$ is $p_c$-white.
\item[(ii)] There are at least $\delta n^2 \pi_4(n)$ sites $v \in \Ball_n$ along $\gamma$ which are ``passage sites'': each such $v$ possesses two neighbors $v_1$ and $v_2$ which are connected in $[-3n,3n] \times [-n,n]$ by $p$-black paths to the left and right sides of $\Ball_{3n}$, respectively.
\end{itemize}
\end{definition}

\begin{figure}
\begin{center}

\includegraphics[width=.5\textwidth]{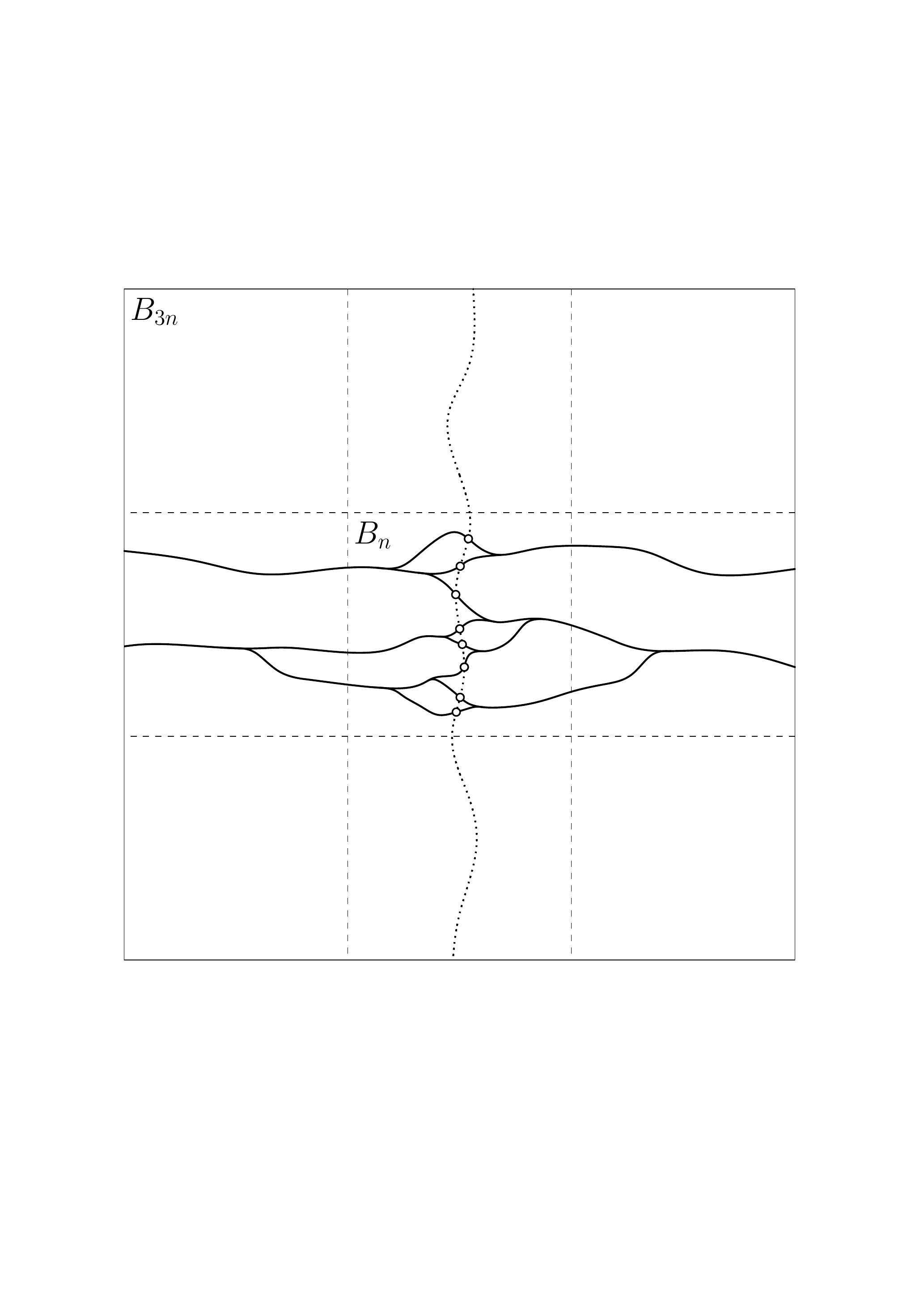}
\caption{\label{fig:def_gamma} This figure depicts the event $\Gamma_p^{n,\delta}$ used in the proof of Theorem \ref{thm:bc_diam2}. The solid paths are $p$-black, the dotted ones are $p_c$-white, and there are at least $\delta n^2 \pi_4(n)$ ``passage sites'', i.e. $p_c$-white vertices with neighbors connected by $p$-black paths to the left and right sides of $\Ball_{3n}$.}

\end{center}
\end{figure}

\begin{lemma} \label{lem:passage_sites}
For every $\lambda \geq 0$, there exists $\delta = \delta(\lambda) > 0$ such that: for all $N \geq 1$ and $n \leq N$, 
\begin{equation}
\PP \big( \Gamma_{p_{-\lambda}(N)}^{n,\delta} \big) \geq \delta.
\end{equation}
\end{lemma}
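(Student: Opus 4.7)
The plan is to bound the number of passage sites from below by a first/second-moment argument on $4$-arm centers, and then ensure that a positive fraction of them lie on a single $p_c$-white vertical crossing $\gamma$ via an RSW-based gluing argument.

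For $v \in \Ball_{n/2}$, let $F_v$ denote the event that $v$ is a $4$-arm center with the prescribed mixed arm structure: two $p_c$-white arms from $v$ reaching the top and bottom sides of $\Ball_{3n}$ and staying inside $[-n,n]\times[-3n,3n]$, and two neighbors of $v$ connected by $p_{-\lambda}(N)$-black arms to the left and right sides of $\Ball_{3n}$ and staying inside $[-3n,3n]\times[-n,n]$. The separation estimate \eqref{eq:sep_4arms}, applied with $(p,p')=(p_{-\lambda}(N), p_c)$, yields $\PP(F_v) \asymp \pi_4(n)$ uniformly for such $v$ with constants depending on $\lambda$; this uses $n \leq N \asymp L(p_{-\lambda}(N))$, by property (i) of the near-critical parameter scale. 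Setting $X := \sum_{v \in \Ball_{n/2}} \mathbbm{1}_{F_v}$, this gives $\EE[X] \gtrsim n^2 \pi_4(n)$.

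For the second moment, standard quasi-multiplicativity of $4$-arm events yields, for $v \ne w$ at distance $r \geq 1$,
\[
\PP(F_v \cap F_w) \;\lesssim\; \pi_4(r) \cdot \pi_4(r,n)^2,
\]
so that summing over pairs gives $\EE[X^2] \lesssim (n^2 \pi_4(n))^2$. The Paley-Zygmund inequality then produces $\PP\big(X \geq c_0\, n^2 \pi_4(n)\big) \geq c_0$ for some $c_0 = c_0(\lambda) > 0$.

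The final and most delicate step is to extract, from these $X$ centers, a single $p_c$-white vertical crossing $\gamma$ of $[-n,n]\times[-3n,3n]$ with $\gtrsim n^2 \pi_4(n)$ passage sites along it. The approach is to further condition on the $p_c$-white horizontal crossings of the thin stitching bands $[-n,n]\times[2n,3n]$ and $[-n,n]\times[-3n,-2n]$, which exist with positive probability by RSW at $p_c$ and combine with the event $\{X \geq c_0 n^2 \pi_4(n)\}$ via Harris-FKG (all added constraints involve $p_c$-white paths, sharing the monotonicity of the white arms in $F_v$). These bands merge the top and bottom white arms of every center into a single $p_c$-white set $C$ spanning the tall rectangle, inside which $\gamma$ is then chosen. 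The hard part is going from ``many centers in $C$'' to ``many centers on a single simple path $\gamma\subset C$'': a priori, different centers may lie on mutually avoiding portions of $C$. The resolution uses the observation that the pair of $p$-black horizontal arms at each center forms an obstruction in the wide rectangle $[-3n,3n]\times[-n,n]$ which funnels any $p_c$-white vertical crossing through the center itself, so that a single $\gamma$ can be chosen so as to visit a constant fraction of them, yielding the lemma with $\delta = \delta(\lambda)$ obtained as the product of the constants in the previous steps.
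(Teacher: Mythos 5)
Your proof is essentially the paper's: the same mixed four-arm events with a $p_c$-white defect at $v$, the estimate $\PP(F_v)\asymp\pi_4(n)$ from \eqref{eq:sep_4arms} and RSW, and a first/second-moment (quasi-multiplicativity plus Paley--Zygmund) argument giving many such centers with probability at least $\delta$. The one place you deviate is the final step, and there your ``stitching bands'' detour is both unnecessary and shaky as stated: the event $\{X\geq c_0 n^2\pi_4(n)\}$ is not monotone (it mixes $p_c$-white and $p_{-\lambda}(N)$-black arms), so a plain Harris--FKG combination with the white band crossings is not justified, and one would need something like Lemma \ref{lem:Harris_gen}, whose disjointness hypothesis fails since the bands sit inside the strip carrying the white arms. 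Fortunately the ``funnelling'' observation you invoke at the very end is the correct closing step and makes the bands irrelevant: for each center, its two $p_{-\lambda}(N)$-black arms together with the white defect form a left--right crossing of $[-3n,3n]\times[-n,n]$ that is black except at the center, so by planarity \emph{every} $p_c$-white vertical crossing of $[-n,n]\times[-3n,3n]$ must pass through \emph{all} the centers (not merely a constant fraction), and such a crossing exists as soon as one center exists, via that center's own white arms. With that step made primary and the FKG/band argument dropped, your proof coincides with the paper's, which in fact leaves this geometric point implicit.
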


\begin{proof}[Proof of Lemma \ref{lem:passage_sites}]
Consider $\lambda \geq 0$. For $1 \leq n \leq N$ and $v \in \Ball_n$, let $\bar{\arm}_4(n,v) = \bar{\arm}_4^{\lambda,N}(n,v)$ be the event that
\begin{itemize}
\item[(i)] $v$ is $p_c$-white,

\item[(ii)] and there exist four paths $\gamma_i$ ($1 \leq i \leq 4$), in counterclockwise order, connecting $\partial v$ to the right, top, left, and bottom sides of $\Ball_{3n}$, respectively, and such that
\begin{itemize}
\item $\gamma_1$ and $\gamma_3$ are $p_{-\lambda}(N)$-black and stay in $[-3n,3n] \times [-n,n]$,
\item $\gamma_2$ and $\gamma_4$ are $p_c$-white and stay in $[-n,n] \times [-3n,3n]$.
\end{itemize}
\end{itemize}
By combining \eqref{eq:sep_4arms} with \eqref{eq:RSW} (to ``extend'' the arms suitably), we can obtain that uniformly for $n \leq N$ and $v \in \Ball_{n/2}$,
$$\PP(\bar{\arm}_4(n,v)) \asymp \pi_4(n)$$
(i.e. the constants in this asymptotic equivalence only depend on $\lambda$). In particular, if we introduce $X_n := \big| \{ v \in \Ball_{n/2} \: : \: \bar{\arm}_4(n,v)$ holds$\} \big|$, we have
$$\EE[X_n] \asymp n^2 \pi_4(n).$$
On the other hand, a standard summation argument for $4$-arm events (with the help of the quasi-multiplicativity property, see e.g. Proposition 17 in \cite{No2008}) yields
$$\EE[X_n^2] \leq c (n^2 \pi_4(n))^2$$
for some $c = c(\lambda) > 0$ (the a-priori lower bound for $4$ arms \eqref{eq:46arms} is also used). Hence, we can apply a second-moment argument to $X_n$: we obtain that for some $\delta = \delta(\lambda) > 0$,
$$\PP \big( X_n \geq \delta n^2 \pi_4(n) \big) \geq \delta,$$
which completes the proof of Lemma \ref{lem:passage_sites}.
\end{proof}

\subsubsection{Proof of Theorem \ref{thm:bc_diam2}}

In the following, we fix some small value $\eta > 0$: to fix ideas, we can take $\eta = \frac{1}{40}$. We provide a scenario under which two stages of freezing occur: a first stage in the near-critical window around $p_c$ (more precisely, in the time interval $[p_{-\lambda}(N),p_c]$, for some well-chosen $\lambda$ large enough), and then a second stage much later, at some time very close to $1$.

We use the construction depicted on Figure \ref{fig:two_chambers} to create two ``chambers'': as we will show, they have the property that each of them has a diameter $< N$, but the diameter of their union is $\geq N$. Note that the left and right parts in Figure \ref{fig:two_chambers} are each similar to the construction in Figure \ref{fig:one_chamber}.

\begin{figure}
\begin{center}

\includegraphics[width=.95\textwidth]{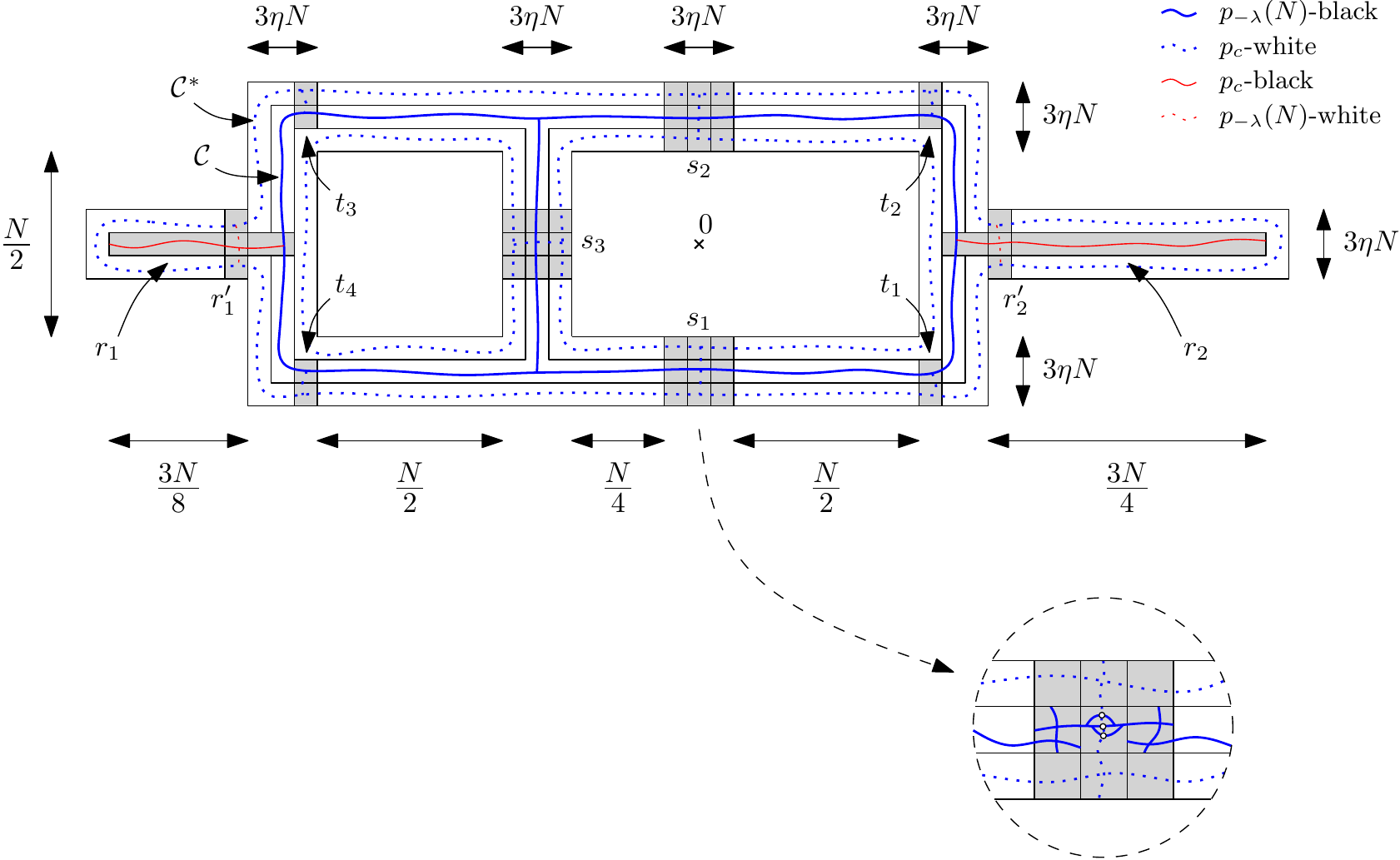}
\caption{\label{fig:two_chambers} Construction used to create two ``chambers'', with diameters (approximately) $\frac{N}{2}$ and $\frac{3N}{4}$. On this figure, the various corridors have width $\eta N$. In each of the three gray squares $s_i$ ($1 \leq i \leq 3$), we consider the event $\Gamma_{p_{-\lambda}(N)}^{\frac{\eta}{2} N,\delta}$ (properly translated, and also rotated by an angle $\frac{\pi}{2}$ in the case of $s_3$), where $\delta = \delta(\lambda) > 0$ is produced by Lemma \ref{lem:passage_sites}.}

\end{center}
\end{figure}

Let us fix $\ve > 0$ as in the statement of Theorem \ref{thm:bc_diam2}. For $\lambda \geq 0$, and $\delta = \delta(\lambda) > 0$ associated with $\lambda$ by Lemma \ref{lem:passage_sites}, we introduce the three events $\tilde{\Gamma}_i = \tilde{\Gamma}_i(N,\lambda,\ve)$ ($1 \leq i \leq 3$): $\tilde{\Gamma}_i$ is the event that in the square $s_i$, $\Gamma_{p_{-\lambda}(N)}^{\frac{\eta}{2} N,\delta}$ (translated, and rotated in the case of $s_3$) holds, and at least one of the (more than $\delta N^2 \pi_4(N)$) passage points in the ``inner square'' (with side length $\eta N$) is still white at time $1 - \frac{\ve}{N^2 \pi_4(N)}$ (i.e. at the beginning of the time interval in \eqref{eq:bc_diam2}).

By using Lemma \ref{lem:passage_sites}, and conditioning on the percolation configuration at time $p_c$, we obtain that for every $\lambda \geq 0$: for all $N \geq 1$,
$$\PP(\tilde{\Gamma}_i) \geq \delta' = \delta'(\lambda,\ve) > 0 \quad (1 \leq i \leq 3).$$

We denote by $\calE_N^{\lambda}$ the event (for the underlying percolation configuration) that the various paths depicted on Figure \ref{fig:two_chambers} exist, and the three events $\tilde{\Gamma}_i$ ($1 \leq i \leq 3$) occur. We first establish the following result.

\noindent \textbf{Claim:} By choosing $\lambda$ large enough, we can ensure that
\begin{equation}
\liminf_{N \to \infty} \PP \big( \calE_N^{\lambda} \big) > 0.
\end{equation}

\begin{proof}[Proof of the Claim]
The rectangles $r_1$ and $r_2$ have lengths $\big( \frac{3}{8} + 2\eta \big)N$ and $\big( \frac{3}{4} + 2\eta \big)N$, respectively, and both have width $\eta N$. Since they each have constant aspect ratio, it follows from RSW that
\begin{equation} \label{eq:panhandle1}
\PP_{p_c} \big( \Ch(r_1) \big) \geq \PP_{p_c} \big( \Ch(r_2) \big) \geq c_1,
\end{equation}
for some constant $c_1 = c_1(\eta) > 0$ independent of $N$ (recall that we consider $\eta$ to be fixed). We can then choose $\lambda > 0$ large enough so that, for all sufficiently large $N$,
\begin{equation} \label{eq:panhandle2}
\PP_{p_{-\lambda}(N)} \big( \Cv^*(r'_1) \big) = \PP_{p_{-\lambda}(N)} \big( \Cv^*(r'_2) \big) \geq 1 - \frac{c_1}{2}
\end{equation}
(by combining \eqref{eq:prop2_nc} with \eqref{eq:exp_decay}). In the remainder of the proof, we fix such a value $\lambda$, and we consider the constant $\delta = \delta(\lambda) > 0$ associated with it by Lemma \ref{lem:passage_sites}.

We now consider the event $\calF_N^{\lambda}$ that
\begin{itemize}
\item[(i)] all the $p_{-\lambda}(N)$-black and $p_c$-white paths on Figure \ref{fig:two_chambers}, except possibly the short vertical connections in $t_i$ ($1 \leq i \leq 4$), exist,

\item[(ii)] and the events $\tilde{\Gamma}_i$ ($1 \leq i \leq 3$) hold.
\end{itemize}
It follows from RSW \eqref{eq:RSW_nc} and Lemma \ref{lem:passage_sites} (recall our choice of $\delta$), which we can combine with the help of Lemma \ref{lem:Harris_gen}, that 
\begin{equation}
\PP \big( \calF_N^{\lambda} \big) \geq c_2
\end{equation}
for some $c_2 = c_2(\eta,\lambda) > 0$ independent of $N$.

\begin{figure}
\begin{center}

\includegraphics[width=.85\textwidth]{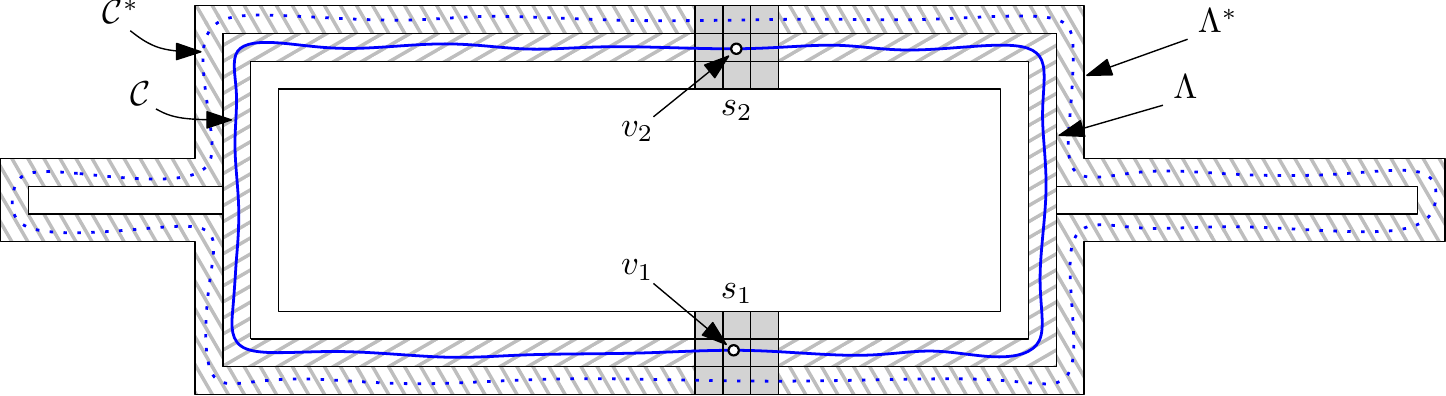}
\caption{\label{fig:two_chambers_annuli} This figure presents the ``annuli'' and the circuits involved in the proof of the Claim. $v_1$ and $v_2$ are two $p_c$-white ``passage sites'', in $s_1$ and $s_2$ respectively. We condition on the innermost circuit $\circuit$ in $\Lambda$, and the outermost circuit $\circuit^*$ in $\Lambda^*$, where $\circuit$ and $\circuit^*$ are as on the figure: $\circuit^*$ is $p_c$-white, and $\circuit$ is $p_{-\lambda}(N)$-black except at two sites, one in each of $s_1$ and $s_2$, which are $p_c$-white.}

\end{center}
\end{figure}

Using the notations of Figure \ref{fig:two_chambers_annuli}, we then condition on the innermost circuit $\circuit$ in $\Lambda$, and on the outermost circuit $\circuit^*$ in $\Lambda^*$, having the properties that $\circuit^*$ is $p_c$-white, and $\circuit$ is $p_{-\lambda}(N)$-black except on two $p_c$-white vertices $v_1 \in s_1$ and $v_2 \in s_2$. Now, consider the sites that lie between these two circuits \emph{and} outside of the squares $s_i$: the percolation configuration in this region is ``fresh''. We make the following observations.
\begin{itemize}
\item In each $t_i$ ($1 \leq i \leq 4$), there exists a vertical $p_c$-white connection between $\circuit$ and $\circuit^*$ with a probability $\geq c_3 > 0$, for some universal constant $c_3$ independent of $N$ (by RSW).
 
 \item The paths in $r_1$ and $r'_1$ (in red on Figure \ref{fig:two_chambers}), respectively $p_c$-black and $p_{-\lambda}(N)$-white, exist with a probability $\geq c_1 - \frac{c_1}{2} = \frac{c_1}{2}$ (by combining \eqref{eq:panhandle1} and \eqref{eq:panhandle2}).
 
 \item For the same reason, the red paths in $r_2$ and $r'_2$ exist with a probability $\geq \frac{c_1}{2}$.
\end{itemize}
Moreover, all these events are conditionally independent, so that the conditional probability of their intersection is at least $c_3^4 \big( \frac{c_1}{2} \big)^2$. We deduce
$$\PP \big( \calE_N^{\lambda} \big) \geq c_2 \cdot c_3^4 \big( \frac{c_1}{2} \big)^2 > 0,$$
which completes the proof of the Claim.
\end{proof}

We now assume that the event $\calE_N^{\lambda}$ holds, and we examine consequences of it for the modified diameter-frozen percolation process itself. First, note that all the $p_{-\lambda}(N)$-black and $p_c$-white paths in this event (in blue on Figure \ref{fig:two_chambers}) are present throughout the time interval $[p_{-\lambda}(N),p_c]$. The circuit $\circuit$ can be divided into two parts, to the left and to the right of the passage sites in $s_1$ and $s_2$, and we denote by $\cluster_L$ and $\cluster_R$ the connected components containing them.
\begin{itemize}
\item On the time interval $[p_{-\lambda}(N),p_c]$, there cannot be any other black connected component with diameter $\geq N$ inside $\circuit^*$, thanks to the $p_c$-white paths in the $t_i$'s ($1 \leq i \leq 4$).

\item $\cluster_L$ has a diameter $\leq 4 \eta N + \frac{N}{2} + 3 \eta N + \frac{N}{4} + 2 \eta N < N$ at time $p_{-\lambda}(N)$, because of the $p_{-\lambda}(N)$-white vertical path in $r'_1$ (here we also use our choice of $\eta$).

\item $\cluster_L$ has a diameter $\geq \frac{3N}{8} + \frac{N}{2} + \frac{N}{4} > N$ at time $p_c$ (using the $p_c$-black horizontal path in $r_1$). Hence, in the frozen percolation process, it freezes at some time in $[p_{-\lambda}(N),p_c]$.

\item Similarly, $\cluster_R$ has a diameter $\leq 2 \eta N + \frac{N}{2} + 4 \eta N < N$ at time $p_{-\lambda}(N)$, and $\geq \frac{N}{2} + \frac{3N}{4} > N$ at time $p_c$. Hence, in the frozen percolation process, it freezes at some time in $[p_{-\lambda}(N),p_c]$.
\end{itemize}
When these two clusters freeze, they create two chambers as desired, which are separated by a sequence of at least $\delta n^2 \pi_4(n)$ $p_c$-white ``passage sites'' in $s_3$.

Intuitively, it is now tempting to conclude the proof as follows. In the right chamber, at time close to $1$, there exists a net $\net$ with mesh $\frac{\eta}{4} N$ with very high probability (using Lemma \ref{lem:net}), so that any connected component with diameter $> \eta N$ inside this chamber has to intersect $\net$. Hence, $\net$ freezes, at the latest when it gets connected to the left chamber (but possibly earlier, due to connections to the outside through $s_1$ or $s_2$).


However, we have to take into account the possibility that after the time $T_3$ when all the passage sites in $s_3$ have become black, a large cluster with diameter $\geq N$ may occur without touching the net. Indeed, the boundary of the right chamber may go back and forth, thus leaving ``bubbles'' with large diameter, as shown on Figure \ref{fig:regularity_boundary} (left). In order to prevent this undesirable behavior, we introduce yet another event depicted on Figure \ref{fig:regularity_boundary} (right), ensuring some regularity for the boundary. With this additional event, the situation depicted on Figure \ref{fig:regularity_boundary} (left) cannot occur, and it is guaranteed that a cluster with diameter $\geq N$ emerging after time $T_3$, and containing passage sites in $s_3$, has to contain the net.

\begin{figure}
\begin{center}

\subfigure{\includegraphics[width=.35\textwidth]{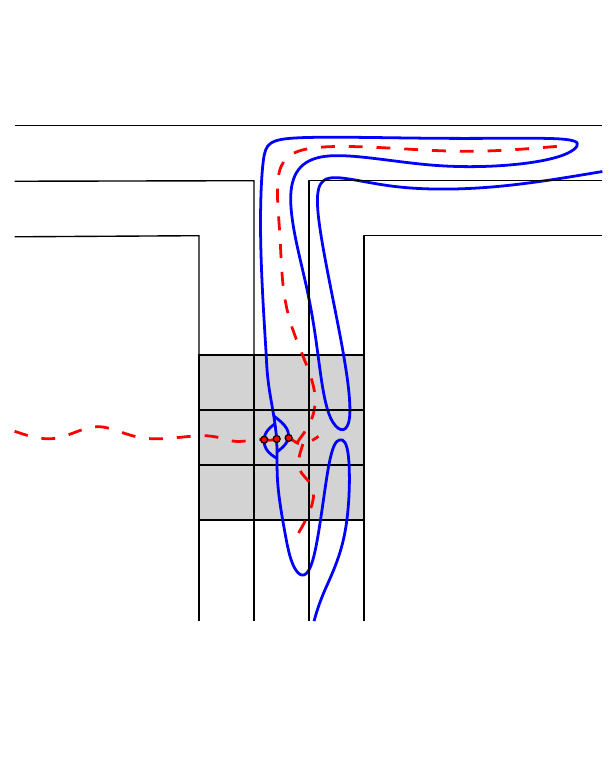}}
\hspace{0.12\textwidth}
\subfigure{\includegraphics[width=.4\textwidth]{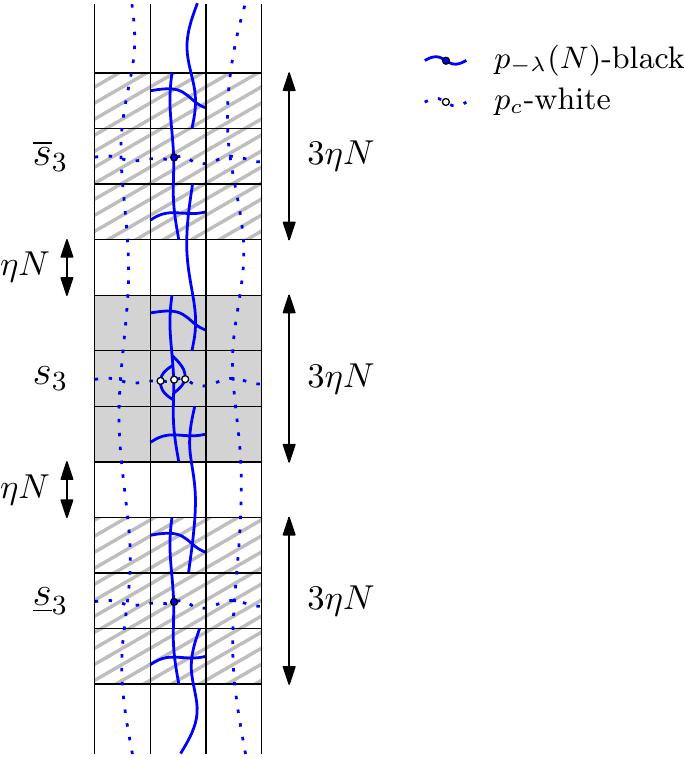}}\\

\caption{\label{fig:regularity_boundary} \emph{Left:} If the (frozen) boundary of the right chamber goes back and forth too much, then some connected component with a diameter larger than $N$ (in red dashed lines) may arise without intersecting the net inside the chamber. \emph{Right:} To circumvent this issue, we introduce two extra four-arm events in the boxes $\underline{s}_3$ and $\overline{s}_3$.}
\end{center}
\end{figure}

We are now in a position to conclude. Indeed, the extra cost of the configurations in $\underline{s}_3$ and $\overline{s}_3$ is just a positive uniform constant (this can be obtained in a similar way as Lemma \ref{lem:passage_sites}, but also in an elementary fashion by successive applications of RSW \eqref{eq:RSW} and conditionings). So, if we let $\tilde{\calE}_N^{\lambda}$ denote the event that $\calE_N^{\lambda}$ holds, as well as the additional event on Figure \ref{fig:regularity_boundary} (right) that we mentioned, we have
$$\PP \big( \tilde{\calE}_N^{\lambda} \big) \geq c_4 \cdot \PP \big( \calE_N^{\lambda} \big),$$
for some $c_4 = c_4(\lambda, \eta) > 0$.

For $\tilde{\ve} = \frac{\ve}{N^2 \pi_4(N)}$ (so that $1 - \tilde{\ve}$ is the beginning of the time interval in \eqref{eq:bc_diam2}), we introduce the following two events $E_i = E_i(\eta,N,\ve)$ ($i = 1, 2$).
\begin{itemize}
\item $E_1 := \net_{1 - \tilde{\ve}}(\frac{\eta}{4} N, 2N)$ (i.e. at time $1 - \tilde{\ve}$, there is a net with mesh $\frac{\eta}{4} N$ in the box $\Ball_{2N}$). Note that $\PP(E_1) \to 1$ as $N \to \infty$ (from Lemma \ref{lem:net}).

\item $E_2 := \{$there exists a $(1 - \tilde{\ve})$-black path from $0$ to $\partial \Ball_{2 \eta N}\}$. We have $\PP(E_2) \geq \theta(1 - \tilde{\ve}) \to 1$ as $N \to \infty$ (since $\tilde{\ve} \to 0$).
\end{itemize}
If $\tilde{\calE}_N^{\lambda}$, $E_1$ and $E_2$ occur, then the event in the left-hand side of \eqref{eq:bc_diam2} occurs as well. Hence, the latter event has probability at least
$$\PP \big( \tilde{\calE}_N^{\lambda} \cap E_1 \cap E_2\big) \geq \PP \big( \tilde{\calE}_N^{\lambda} \big) - \PP ( E_1^c ) - \PP ( E_2^c ),$$
which completes the proof.

\subsection{Remarks and open questions} \label{sec:diam_frozen_open}

As mentioned earlier, the boundary conditions are essential for the results of \cite{Ki2015}, which contrast with Theorem \ref{thm:bc_diam2}. It should be noted that the first important observation in this paper, namely that for every fixed $K > 1$, the number of frozen clusters in $\Ball_{KN}$ is tight in $N$ (Lemma 3.3 in \cite{Ki2015}) already breaks down for the modified model. Indeed, its proof makes use of the existence of white paths at some time $p = p_{-\lambda}(N)$, which are provided by the boundaries of frozen clusters. This observation is crucial for the arguments in \cite{Ki2015} since it then allows one to study the frozen clusters ``one by one'' (and ensures that the procedure ends after a finite number of steps).

Our results leave open the question whether clusters freeze at times bounded away from both $p_c$ and $1$. In particular, is it true that for every fixed finite connected $C$ with $0 \in C$,
$$\liminf_{N \to \infty} \PPdiam \big( \cluster_1(0) = C \big) > 0,$$
i.e. that microscopic clusters (with diameter of order $1$) have a positive density as well?

\section{Volume-frozen percolation} \label{sec:vol_frozen}

We now discuss briefly the influence of boundary conditions for volume-frozen percolation, and discuss how the results for the original process (where the boundary stays white forever) obtained in \cite{BN2015, BKN2015} could be adapted.

First, we observe that for all our arguments in \cite{BN2015}, we never use the particular definition of the boundary of a hole. The results in that paper are obtained by combining estimates on the volume of the largest black cluster in a box \cite{BCKS2001} with \eqref{eq:exp_decay} and \eqref{eq:equiv_theta} (the a-priori bounds \eqref{eq:RSW} and \eqref{eq:1arm} are also used). For the induction step, we use a geometric construction requiring the existence of black and white circuits in some prescribed annuli. These circuits ensure that when the giant black connected component freezes, it creates a hole whose boundary has to lie between these two circuits (and so in particular in the union of the two annuli). This observation holds true for both the original and the modified volume-frozen processes. Hence, the results from \cite{BN2015} about exceptional scales hold for the modified process as well: with the same sequence of exceptional scales $(m_k(N))_{k \geq 1}$, we observe the following dichotomy for the process in $\Ball_{m(N)}$.
\begin{itemize}
\item For all $c > 1$ and $k \geq 2$, if $m(N)$ satisfies $c^{-1} m_k(N) \leq m(N) \leq c m_k(N)$ for all $N$ large enough, then
$$\liminf_{N \to \infty} \PPvol (0 \text{ freezes for the process in } \Ball_{m(N)}) > 0.$$

\item For all $\ve > 0$ and $k \geq 1$, there exists $c > 1$ such that: if $c m_k(N) \leq m(N) \leq c^{-1} m_{k+1}(N)$ for all $N$ large enough, then
$$\limsup_{N \to \infty} \PPvol (0 \text{ freezes for the process in } \Ball_{m(N)}) \leq \ve.$$
\end{itemize}

On the other hand, the boundary conditions are used in a crucial way in \cite{BKN2015}, for the ``approximability'' and ``continuity'' lemmas (Lemmas 3.7 and 3.8, resp.), which require the existence of the right number of macroscopic arms (with some prescribed colors). We use in particular the a-priori upper bound on $6$-arm events \eqref{eq:46arms}. These lemmas are important to enable us to approximate the process by a Markov chain, using the separation of scales. The other arguments in \cite{BKN2015} do not depend on the particular boundary conditions.

%
%

\bibliographystyle{plain}
\bibliography{Modified_FP}

\end{document}